\theoremstyle{plain}
\newtheorem{theorem}{Theorem}[section]
\newtheorem{lemma}[theorem]{Lemma}
\newtheorem{proposition}[theorem]{Proposition}
\theoremstyle{definition}
\theoremstyle{remark}
\begin{document}

\articletype{ARTICLE TEMPLATE}
\title{On the Voigt profile and its dual}

\author{
\name{Massimo~Cannas\textsuperscript{a}\thanks{CONTACT prof. M.~Cannas. Email: massimo.cannas@unica.it} }
\affil{\textsuperscript{a} University of Cagliari, Italy}
}

\maketitle

\begin{abstract}
The Voigt profile is the density obtained from the convolution of a Gaussian and a Cauchy and it is widely used in atomic and molecular spectroscopy. We show that the Voigt profile is a scale mixture of Gaussian distributions, with mixing Levy distribution. A consequence of this result is that there exists a dual of the Voigt distribution, which is itself a normal scale mixture. Both the Dual Voigt and its mixing are transformations, via truncation and reflection, of the Normal and Levy random variables. We discuss the dual Voigt characteristics, propose algorithms for parameter estimation and outline further developments.
\end{abstract}

\begin{keywords}  
Voigt ; normal scale mixture ; dual density ; dual Voigt ; stable distribution ; infinitely divisible distribution
\end{keywords}

\section{Introduction}
Let $X~\sim Cauchy (\mu_X, \gamma)$ and $Y~\sim Normal(\mu_Y,\sigma^2)$ be independent random variables. Then the law of 
\begin{equation}
\label{voigtconv}
U=X+Y
\end{equation}
 is known as the Voigt distribution of parameters ($\mu,\gamma,\sigma^2$) where $\mu=\mu_X+\mu_Y$ and $\sigma, \, \gamma>0$. When $\mu=0$ we talk of the centered Voigt distribution. The Voigt distribution is ubiquitous in several areas of physics because it well describes several broadening mechanisms up to suitable proportionality constants. In spectroscopy it is customarily employed the term Voigt \emph{profile} to refer at the empirical density arising from the superposition of Doppler and Gaussian broadening.

\section{Voigt as a normal scale mixture}
In this paper the expression ``normal scale mixture" means a variance mixture of a Normal distribution. If $U$ is a normal scale mixture its density can be written as:
\begin{equation}
\label{nsm}
 p(u)= \int_{0}^{\infty} \frac{1}{\sqrt{2\pi v}}\exp\left ( \frac{-u^2}{2v} \right )f(v)d(v)
\end{equation}
This can be concisely written as $N(\mu, V)$ or even more compactly with the product formula $ZV^{1/2}$, where $V$ is a probability distribution concentrated on $(0, +\infty)$, i.e. $V$ has support $\lbrack c, +\infty)$ with $c\ge0$.  

Herein we show that the Voigt is a normal scale mixture. We give a proof via stochastic representation. The proof was inspired by the remarkably simple proof of the scale-mixture representation of the Laplace distribution given by Ding and Blitzstein \cite{ding}. In the following we write $A\sim B$ if random variables $A$ and $B$ have the same distribution. We recall the following lemma:
\begin{lemma}
\label{lemma}
Let $Z$, $Z_1$ and $Z_2$ be standard normal variables. Let also $L$ and $C$ be Levy and Cauchy distribution, respectively. The following representations hold:
\begin{itemize}
\item[a)] $\sigma^2+\frac{\gamma^2}{Z^2}\sim L(\sigma^2,\gamma^2)$ 
\item[b)] $\frac{\sigma_1 Z_1}{\sigma_2 Z_2} \sim C(0,\frac{\sigma_1}{\sigma_2})$
\end{itemize}
\end{lemma}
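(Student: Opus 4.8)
The plan is to prove each representation by a one-dimensional change of variables combined with the scale- and location-equivariance of the Levy and Cauchy families.

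For (a), I would first determine the law of $1/Z^2$. Since $Z$ is standard normal, $W=Z^2\sim\chi^2_1$ has density $(2\pi w)^{-1/2}e^{-w/2}$ on $(0,\infty)$; the substitution $V=1/W$ (with Jacobian $v^{-2}$) gives $V$ the density $(2\pi)^{-1/2}v^{-3/2}e^{-1/(2v)}$, which is exactly the standard Levy density $L(0,1)$. Two elementary facts about the Levy family then finish the job: positive scaling sends $L(0,c)$ to $L(0,ac)$, and translation sends $L(0,c)$ to $L(b,c)$. Applying the first with $a=\gamma^2$ gives $\gamma^2/Z^2\sim L(0,\gamma^2)$, and then the second with $b=\sigma^2$ gives $\sigma^2+\gamma^2/Z^2\sim L(\sigma^2,\gamma^2)$.

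For (b), I would use the classical ratio-of-normals computation. Writing $\varphi$ for the standard normal density, the ratio-density formula gives the density of $Z_1/Z_2$ at $w$ as $\int_{\mathbb{R}}|t|\,\varphi(wt)\,\varphi(t)\,dt$; since the integrand is even in $t$ this equals $2\int_0^\infty t\,\varphi(wt)\,\varphi(t)\,dt$, an elementary Gaussian integral equal to $\pi^{-1}(1+w^2)^{-1}$, i.e. the $C(0,1)$ density. Factoring $\frac{\sigma_1 Z_1}{\sigma_2 Z_2}=\frac{\sigma_1}{\sigma_2}\cdot\frac{Z_1}{Z_2}$ and using that $C(0,1)$ scales to $C(0,\lambda)$ under multiplication by $\lambda>0$ yields $\frac{\sigma_1 Z_1}{\sigma_2 Z_2}\sim C(0,\sigma_1/\sigma_2)$.

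There is no genuine obstacle here: both parts are routine transformations. The only care needed is bookkeeping — getting the exponent $-3/2$ and the normalizing constant right when inverting the $\chi^2_1$ density in (a), and checking that the (location, scale) parametrizations of $L$ and $C$ used in the statement are precisely the ones produced by these transformations, so that $\gamma^2$ plays the role of the Levy scale and $\sigma_1/\sigma_2$ that of the Cauchy scale. Since the lemma is used downstream only as an input to the Voigt stochastic representation, verifying these density identities directly is enough.
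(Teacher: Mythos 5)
Your proof is correct, and in fact it supplies more than the paper does: the paper deliberately states this lemma \emph{without proof}, remarking only that part (a) is the identity used in simulating stable laws (citing Chambers et al.) and that part (b) is the standard ratio-of-normals construction of the Cauchy. Your two change-of-variables computations are the standard way to verify these facts and both check out: inverting the $\chi^2_1$ density correctly produces the $(2\pi)^{-1/2}v^{-3/2}e^{-1/(2v)}$ form of the standard Levy, the scaling rule $aV\sim L(0,a\cdot c)$ for $a>0$ and the translation rule are exactly what is needed to land on $L(\sigma^2,\gamma^2)$ in the parametrization the paper uses elsewhere (density proportional to $(v-\sigma^2)^{-3/2}\exp(-\tfrac{\gamma^2}{2(v-\sigma^2)})$), and the ratio-density integral $\int_{\mathbb{R}}|t|\,\varphi(wt)\varphi(t)\,dt=\pi^{-1}(1+w^2)^{-1}$ together with positive scaling of the Cauchy gives $C(0,\sigma_1/\sigma_2)$. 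The only substantive point you rightly flag --- matching the $(\text{location},\text{scale})$ conventions so that $\gamma^2$ (not $\gamma$) is the Levy scale --- is indeed where a careless derivation could go wrong, and you handle it correctly. So there is nothing to compare against on the paper's side; your argument is a complete and valid substitute for the omitted proof.
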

We stated the lemma without proof but we observe that the first result is used for simulation of stable distributions, see e.g., Chambers et al. \cite{chambers}. The second recalls one standard way of generating the Cauchy distribution, as a ratio of normal distributions. We can now prove that the Voigt is a normal scale mixture as follows. Using stochastic representation, the previous proposition is the product formula $U=ZL^{1/2}$ and the generalization for the uncentered Voigt is simply $U=NL^{1/2}$. 

\begin{proposition}[voigt as normal scale mixture]
\label{nsmvoigt2}
Let $U\sim Voigt(\mu,\gamma,\sigma^2)$. Then $U\sim Normal(\mu,L)$ with mixing distribution $L\sim Levy(\sigma^2,\gamma^2)$. 
\end{proposition}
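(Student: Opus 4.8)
The plan is to prove the centered case and then re-attach the location, since adding the constant $\mu$ to a normal scale mixture $N(0,V)$ trivially produces $N(\mu,V)$. So first I would write $U-\mu=(X-\mu_X)+(Y-\mu_Y)$ with $X-\mu_X\sim C(0,\gamma)$ and $Y-\mu_Y\sim Normal(0,\sigma^2)$ independent, and aim to show $U-\mu\sim Normal(0,L)$ with $L\sim Levy(\sigma^2,\gamma^2)$. The whole argument is then just a composition of the two stochastic representations already recorded in Lemma~\ref{lemma}.

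The second step is to rewrite the Cauchy summand using Lemma~\ref{lemma}(b) with $\sigma_1=\gamma$ and $\sigma_2=1$: we may realize $X-\mu_X\sim \gamma Z_1/Z_2$ for independent standard normals $Z_1,Z_2$, which we also take independent of $Y$. The point of this representation is that, conditionally on $Z_2$, the variable $\gamma Z_1/Z_2$ is $Normal(0,\gamma^2/Z_2^2)$; that is, the Cauchy component is itself a normal scale mixture with (random) mixing variance $\gamma^2/Z_2^2$.

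The one genuinely substantive step is combining the two Gaussian pieces. Conditionally on $Z_2$, the summands $X-\mu_X$ and $Y-\mu_Y$ are independent normals with variances $\gamma^2/Z_2^2$ and $\sigma^2$, hence their sum is $Normal(0,\sigma^2+\gamma^2/Z_2^2)$; un-conditioning exhibits $U-\mu$ as a normal scale mixture with mixing variable $V=\sigma^2+\gamma^2/Z_2^2$. By Lemma~\ref{lemma}(a), $V\sim Levy(\sigma^2,\gamma^2)$, so $U\sim Normal(\mu,L)$ with $L\sim Levy(\sigma^2,\gamma^2)$ — exactly the product formula $U=NL^{1/2}$ announced just before the statement. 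I expect the only point requiring care is the legitimacy of this nested/conditional mixing, i.e. the assertion that ``a fixed-variance normal plus an independent normal scale mixture is again a normal scale mixture, with the fixed variance added to the mixing variable''; this can be justified by Fubini applied to the integral in~\eqref{nsm}, using the convolution identity for normal densities inside the integrand.

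As a cross-check (and an alternative route that avoids the conditioning bookkeeping entirely), I would verify the claim at the level of characteristic functions: the centered Voigt has characteristic function $\exp(-\gamma|t|-\sigma^2 t^2/2)$, while $N(0,V)$ has characteristic function $E[\exp(-Vt^2/2)]$, which is the Laplace transform of $V$ at $t^2/2$. Since the Laplace transform of $Levy(\sigma^2,\gamma^2)$ is $\exp(-\sigma^2 s-\gamma\sqrt{2s})$, evaluating at $s=t^2/2$ returns precisely $\exp(-\sigma^2 t^2/2-\gamma|t|)$, matching the Voigt. Everything else in the proof is routine manipulation of the Levy location and scale parameters.
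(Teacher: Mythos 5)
Your proposal is correct and is essentially the paper's own argument: both proofs condition on the standard normal in the denominator of the Cauchy representation (Lemma~\ref{lemma}(b)), combine the two conditionally Gaussian pieces into a single normal with variance $\sigma^2+\gamma^2/Z^2$, and identify that variance as $Levy(\sigma^2,\gamma^2)$ via Lemma~\ref{lemma}(a) --- you merely read the chain from the convolution toward the mixture, where the paper reads it from $\sqrt{L}Z$ toward the convolution. Your characteristic-function cross-check (Laplace transform of the L\'evy evaluated at $t^2/2$ reproducing $e^{-\gamma|t|-\sigma^2t^2/2}$) is a valid independent verification not present in the paper, but it is supplementary rather than a different proof.
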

\begin{proof}
We first consider the centered Voigt profile ($\mu=0$). We show that $N(0, L) \sim \sqrt{L} Z$ is decomposable as the sum of a Cauchy and Normal distribution.
Proposition \ref{lemma} a) implies that

$$\sqrt{L}\, Z = \sqrt{\sigma^2 + \frac{\gamma^2}{Z_1^2}}\, Z$$

Conditioning on $Z_1$ yields

$$ \sqrt{L} Z \, | \, Z_1 \sim N\left(0, \frac{\gamma^2}{z_1^2} + \sigma^2\right)$$

Proposition \ref{lemma} b) implies that

$$  \gamma\frac{Z_2}{Z_1}+\sigma Z_3  \, | \,Z_1 \sim N\left(0, \frac{\gamma^2}{z_1^2} + \sigma^2\right)$$

So, unconditionally

$$ \gamma\frac{Z_2}{Z_1}+\sigma Z_3 \sim \sqrt{L} Z $$

The generalization to any $\mu$ is easily obtained by considering $Z_1+\mu$ and $Z_2 + \mu$ in place of $Z_1$ and $Z_2$. 
\end{proof}

We discuss herein some interpretations of the product formula: 
\begin{equation}\label{productformula}
U=ZL^{1/2}
\end{equation}

\subsection{Composition of stable laws}
When $\sigma=0$ the product formula becomes $C=Z(IG)^{1/2}$ and connects the three \emph{strictly} stable laws with closed form density. In fact, the normal with zero mean is strictly stable with stability parameter $\alpha=2$, the unshifted Levy is an inverse gamma, which is strictly stable with $\beta=1/2$ and their product is a Cauchy, which is strictly stable with parameter $\alpha \beta=1$. More generally, if $X$ and $Y$ are strictly stable distributions with stability parameters $\alpha$ and $\beta$ then $X^{1/\alpha}Y$ is strictly stable with parameter $\alpha\beta$ (see Feller VI.2, example h, pag 176, also pag 348 and pag 596  \cite{feller}). 

When $\sigma>0$ the previous property is no longer valid since the Levy is not \emph{strictly} stable. By proposition \ref{nsmvoigt2} the product is a Voigt, which is not stable since it is the sum of two stable distributions with different exponents. In general, the sum of stable laws with different exponents cannot be stable (see Nolan \cite{nolan}, page 264). There is no simple formula for the sum of stable variables with different exponents. Otiniano et al. \citeyear{otiniano} showed that the density can be expressed in terms of the Fox $H$-function.

\subsection{Stochastic processes}
The product formula $U=ZL^{1/2}$ can also be interpreted in the language of stochastic processes by saying that the Voigt process arises as a standard brownian motion subordinated to a Levy distribution. In fact, let $X(t)$ be a standard brownian motion with transition density $p_t(x)=(2\pi t)^{-1/2} e^{-1/2 x^2/t}$ and let $T(t)~\sim Levy(\sigma^2, t^2/2)$. When $\sigma=0$, the subordinated process $X(T(t))$ has a Cauchy transition density (see Feller X.7 pag 348) and it is therefore known as Cauchy process. When $\sigma>0$, the transition density is given by the product formula in eq \ref{productformula} so the process $U(T(t))$ has a Voigt transition density.

\subsection{Voigt as prior predictive distribution}
The normal scale mixture representation $U=ZL^{1/2}$  arises in Bayesian inference when we consider a normal model $N(0,v)$ with unknown variance $v$ and we assume that $V\sim Levy(\sigma^2,\gamma^2)$ is the prior distribution for the variance. The latter can be thought of as an informative prior excluding variance values less than a known threshold $\sigma^2$. Kim et al.~\cite{kim} used this distribution for modeling long-range internet traffic. Indeed, using Bayes' rule the posterior distribution is
\begin{equation*}
\label{bayes}
f(v | u)=\frac{f(u | v) f(v)}{\int f(u | v) f(v)dv}
\end{equation*}
and the mixture representation of the Voigt appears in the denominator, i.e. the Voigt is the \textit{prior predictive distribution} for the Normal-Levy model. 

\section{The Dual Voigt distribution}
Dual (or adjunct) densities were introduced by Rossberg \cite{rossberg} in a study of positive definite functions. The classic example is the self duality of the normal distribution (example 2.4 in Rossberg\cite{rossberg}; I.J. Good \cite{good}), which is deemed as a curious phenomenon by Feller (\cite{feller} Vol. II, pag 504). Another pair of well-known dual variables are the Cauchy and Laplace distributions. A list of dual densities of common distributions was provided by Nadaraja \cite{nadaraja}. It turns out that most dual densities are not common distributions but they often have more tractable characteristics than the original variables. Not all random variables admit a dual but, when the dual exists, its probability density $p'$ is proportional to the characteristic function of the original variable. More precisely, the following relations hold \cite{rossberg} for a random variable with density $p$ and characteristic function $\psi$:
\begin{equation}
\label{fundrel}
\psi(t)=\frac{p'(t)}{p'(0)} \qquad \psi'(t)=\frac{p(t)}{p(0)} \qquad 2\pi p(0)p'(0)=1
\end{equation}

 The next proposition lists the main characteristics of the Dual Voigt.

  \begin{proposition}
  \label{pippo}
  Let $U$ be a centered Voigt$(\gamma, \sigma)$. Then 
  \begin{itemize}
  \item[i)] The dual of $U'$ exists and belongs to the 2-parameter exponential family
  \item[ii)] Let $X\sim Norm(-\gamma/\sigma^2, 1/\sigma^2)$, then
  \begin{equation}\label{3.1.2}
|U'| = X[X>0] \, \iff \,  U'=
\begin{cases} 
        (X | X>0)  & \textrm{with probability }1/2 \\
      - (X | X>0) &  \textrm{with probability } 1/2  \\
   \end{cases}
     \end{equation}
\item[iii)]
\begin{equation}\label{3.1.3}
E[(U')^{n}]=
\begin{cases} 
        E\left( X | X>0 \right)^{n}&  n\, \textrm{even}\\
       0         &   n \,  \textrm{odd}  \\
   \end{cases}
  \end{equation}
  \end{itemize}
  \end{proposition}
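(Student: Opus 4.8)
The starting point is the fundamental relation \eqref{fundrel}: the dual density $p'$ is proportional to the characteristic function $\psi$ of the centered Voigt. Since $U = X + Y$ with $X \sim Cauchy(0,\gamma)$ and $Y \sim Normal(0,\sigma^2)$ independent, we have $\psi_U(t) = \psi_X(t)\psi_Y(t) = e^{-\gamma|t|}\,e^{-\sigma^2 t^2/2}$. To get $p'$ I normalize: $p'(0)$ is determined by $2\pi p(0)p'(0)=1$, but more directly I can just compute $\int_{-\infty}^\infty e^{-\gamma|t| - \sigma^2 t^2/2}\,dt$ and divide. So the whole proposition reduces to analyzing the density proportional to $\exp(-\gamma|t| - \sigma^2 t^2/2)$.

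For part (i), I would observe that on $t>0$ this density is proportional to $\exp(-\sigma^2 t^2/2 - \gamma t)$, which is exactly the kernel of a $Normal(-\gamma/\sigma^2, 1/\sigma^2)$ restricted to the positive half-line; by the even symmetry in $t$, $p'$ is the symmetrization (equal-weight reflection) of that half-normal. This simultaneously proves (ii): writing $X \sim Normal(-\gamma/\sigma^2, 1/\sigma^2)$, the conditional $X \mid X>0$ has density proportional to $\exp(-\sigma^2 t^2/2 - \gamma t)$ on $(0,\infty)$, so $|U'| \overset{d}{=} (X\mid X>0)$ and $U'$ is $\pm(X\mid X>0)$ with probability $1/2$ each — exactly the mixture/reflection statement in \eqref{3.1.2}. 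For the exponential-family claim in (i), I exhibit $p'$ in the form $h(u)\exp(\eta_1 T_1(u) + \eta_2 T_2(u) - A(\eta))$ with natural parameters $(\eta_1,\eta_2) = (-\sigma^2/2, -\gamma)$ (up to sign conventions) and sufficient statistics $(u^2, |u|)$, the carrier being constant; the normalizer $A$ is the log of the integral computed above, which can be written in closed form via the complementary error function.

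Part (iii) then follows immediately from (ii) by symmetry: since $U' \overset{d}{=} S\,|U'|$ with $S$ an independent symmetric sign and $|U'| \overset{d}{=} (X\mid X>0)$, we get $E[(U')^n] = E[S^n]\,E[(X\mid X>0)^n]$, which vanishes for odd $n$ and equals $E[(X\mid X>0)^n]$ for even $n$. I would spell out the one-line conditioning argument and note that the even moments are finite (unlike those of the Voigt itself, which has no finite moments of order $\ge 1$) — this is the "more tractable characteristics" remark in the surrounding text, worth flagging.

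The only genuine obstacle is bookkeeping rather than mathematics: getting the normalization constants and the sign of the mean $-\gamma/\sigma^2$ consistent across \eqref{fundrel}, the Fourier convention used for $\psi$, and the completion of the square $-\sigma^2 t^2/2 - \gamma t = -\frac{\sigma^2}{2}(t + \gamma/\sigma^2)^2 + \gamma^2/(2\sigma^2)$. I would do this completion of the square once, carefully, early in the proof, since every one of the three parts leans on it. Everything else — identifying the exponential family, reading off the reflected-truncated-normal representation, and deducing the moments — is then essentially immediate.
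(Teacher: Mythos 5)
Your proposal is correct and follows essentially the same route as the paper: normalize the Voigt characteristic function $e^{-\gamma|t|-\sigma^2 t^2/2}$ by completing the square, read off the two-parameter exponential family with statistics $(|u|,u^2)$, identify the restriction to $u>0$ as a $N(-\gamma/\sigma^2,1/\sigma^2)$ truncated below zero and its even reflection, and obtain the moments by symmetry. The only step the paper makes explicit that you leave implicit is the verification that the dual exists at all (boundedness of the Voigt density, reduced to $p(0)<\infty$ and checked in the appendix), though your finite normalizing integral supplies this via the relation $2\pi p(0)p'(0)=1$.
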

  \begin{proof}
    \begin{itemize}
  \item[i)] 
    Following Rossberg, the dual $U'$ exists when the density function $p(u)$ is non negative, bounded and integrable. The Voigt density is clearly non negative and integrable so it remains to show it is bounded. Since $p(u)<p(0)$ it is enough to show that $p(0)$ is finite, which is proved in the Appendix. 
   We note that the first relation (\ref{fundrel}) implies that the density function is $p'(u)=p'(0)\psi(u)$. The area behind $\psi(u)$ is
\begin{align*}
A= \int_{-\infty}^{+\infty} \psi(u) du = \int_{-\infty}^{+\infty} e^{-\gamma |u| - \sigma^2 u^2/2} du&\\
 \textrm{Completing the square in the exponential we have}\\[4pt]
         = \frac{\sqrt{2\pi}}{\sigma}e^{\gamma^2/2\sigma^2} \int_{-\infty}^{+\infty} \frac{\sigma}{\sqrt{2\pi}}&e^{-\gamma^2/2\sigma^2 -\gamma |u| - \sigma^2u^2/2} du \\
         = \frac{\sqrt{2\pi}}{\sigma}e^{\gamma^2/2\sigma^2} \int_{-\infty}^{+\infty} \frac{\sigma}{\sqrt{2\pi}}&e^{-\sigma^2/2(u^2 + 2\gamma/\sigma^2 |u| + \gamma^2/\sigma^4)} du \\                          
         = \frac{\sqrt{2\pi}}{\sigma}e^{\gamma^2/2\sigma^2} \int_{-\infty}^{+\infty} \frac{\sigma}{\sqrt{2\pi}}&e^{-\sigma^2/2(|u| + \gamma/\sigma^2)^2} du \\
                                    = \frac{\sqrt{2\pi}}{\sigma}e^{\gamma^2/2\sigma^2}2(1-\Phi(\gamma/\sigma))& 
 \end{align*}
 Thus the density function is 
   \begin{align}
   p'(u)&= \frac{1}{A}\psi(u) \nonumber \\ 
         &= \frac{1}{2\left(1-\Phi(\gamma/\sigma)\right)}\frac{\sigma}{\sqrt{2\pi}} e^{-\frac{\gamma^2}{2\sigma^2}}e^{-\gamma |u| -\frac{\sigma^2}{2} u^2} \label{dvdensity}
   \end{align}

Finally, let $\theta=(\gamma, \sigma^2)$. Then
 $$p'(u )=a(\theta)b(y)\exp\left[ c_1(\theta)d_1(u)+c_2(\theta)d_2(u) \right]$$
where
$$a(\theta)=\frac{e^{-\frac{\gamma^2}{2\sigma^2}}}{2(1-\Phi(\gamma/\sigma)) } \frac{\sigma}{\sqrt{2\pi} }
 \qquad b(y)=1,$$
 \begin{equation*}
c_1(\theta)=-\gamma  \qquad  c_2(\theta) =-\frac{\sigma^2}{2}
 \qquad d_1(u)=|u|  \qquad  d_2(u)=u^2
 \end{equation*}
 so $U'$ belongs to the 2-parameter exponential family.
  \item[ii)]   
We observe that, for $u >0$, the density function is proportional to the density of $X \sim N(-\gamma/\sigma^2, 1/\sigma^2)$ truncated below zero. The density is reflected over the vertical axis so the division by $ 2Pr (X>0)=2(1-\Phi(\gamma/\sigma))$ normalizes the area behind the curve. Thus, the Dual Voigt is a mixture of $(X | X>0)$ and $-(X | X>0)$ with equal weights.
 \item[iii)]
  All odd moments are equal to zero because the dual Voigt density is an even function. As for the even moments we have
\begin{align*}
E[(U')^{2n}]&=\int_{-\infty}^{\infty} u^{2n} \frac{1}{2\left(1-\Phi (\gamma/\sigma)\right)}\frac{\sigma}{\sqrt{2\pi}}e^{-\frac{\sigma^2}{2}\left(|u|+\frac{\gamma}{\sigma^2}\right)^2}du\\
&=\int_{0}^{\infty} u^{2n} \frac{1}{\left(1-\Phi (\gamma/\sigma)\right)}\frac{\sigma}{\sqrt{2\pi}}e^{-\frac{\sigma^2}{2}\left(u+\frac{\gamma}{\sigma^2}\right)^2}du
\end{align*}
 The last expression is the moment of order $2n$ of a $N(-\gamma/\sigma^2, 1/\sigma^2)$ truncated from below at zero.  
  \end{itemize}
  \end{proof}

When $\gamma \to 0$ the dual Voigt density is increasingly bell-shaped while for $\gamma \to +\infty$ is increasingly spiked, see Figure~\ref{plotdual}. The ratio $\gamma/\sigma$ plays a pivotal role for the shape of the Dual Voigt. The $Normal (-\gamma/\sigma^2, 1/\sigma^2)$ has the right inflection point equal to $(-\gamma + \sigma)/\sigma^2 <0$ for $\gamma < \sigma$. We can conclude that the Dual Voigt is log-convex for $\gamma < \sigma$ and neither log-convex nor log-concave otherwise. In the limiting case $\gamma=0$ both the Voigt and its dual are the same normal random variable $N(0,1/\sigma^2)$ and we recover the self-duality of the normal distribution. The dual voigt peak in zero depends both on the ratio $\gamma/\sigma$ and on $\sigma$ (or $\gamma$). We have 
$p'(0) = \tfrac{\sigma}{2R(t)}$ where $R(t) =\frac{1-\Phi(t)}{\phi(t)}$ is the Mills ratio. In the case $t = \gamma / \sigma = 1$ the peak is lower than the standard normal peak if $\tfrac{\sigma}{2R(1)} < \phi(0) = \tfrac{1}{\sqrt{2\pi}}$ which is true when $\sigma =\gamma< 0.523$. Conversely, for $\sigma \geq 0.523$, the peak of the Dual Voigt exceeds the Normal one, see~Figure~\ref{plotdual} (left).


 We close this section with a practical formula for the Dual Voigt second and fourth moment, obtained using a recursive formula for the moments of a truncated normal distribution given in Horrace \cite{horrace}:
 \begin{align*}
   E(U'^{2})&=\frac{\gamma^2}{\sigma^4}+\frac{1}{\sigma^2} - \frac{\gamma}{\sigma^3} \left(\frac{ \phi(\gamma/\sigma)}{1-\Phi(\gamma/\sigma)} \right)\\[6pt]
  E(U'^{4})&=\frac{\gamma^4}{\sigma^8}+\frac{6\gamma^2}{\sigma^6} +\frac{3}{\sigma^4} 
  - \left(\frac{5\gamma}{\sigma^5} + \frac{\gamma^3}{\sigma^7}\right) \left( \frac{\phi(\gamma/\sigma)}{1-\Phi(\gamma/\sigma)} \right).
 \end{align*}
 
 Since $E(U')=0$, the first expression gives the dual Voigt variance.

\section{The Dual Voigt as a normal scale mixture}
Gneiting \cite{gneiting} discussed specifically dual densities of normal scale mixtures. He showed that, whenever the dual of a normal scale mixture exists, it is itself a normal scale mixture. He also provided a formula for the dual mixing and several examples of dual pairs. Using his results we can prove the following
\begin{proposition}
\label{dvnsm}
Let $U\sim Voigt(0,\gamma,\sigma^2)$. Then the Dual Voigt $U'$ is a normal scale mixture, $U'\sim Normal(\mu,V')$, with mixing distribution:
\begin{align*}
V'&=2/\sigma^2 - L[ L < 2/\sigma^2] \\
L&\sim Levy\left(1/\sigma^2,\gamma^2/\sigma^4\right)
\end{align*}
\end{proposition}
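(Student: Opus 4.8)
The plan is to invoke Gneiting's formula for the dual mixing distribution of a normal scale mixture and specialize it to the case at hand, where the mixing distribution of the Voigt is $L\sim Levy(1/\sigma^2,\gamma^2/\sigma^4)$ by Proposition \ref{nsmvoigt2} (with the parameters rescaled so that the Gaussian kernel has unit variance in the mixing integral \eqref{nsm}). First I would recall the precise statement of Gneiting's result: if $U$ has a normal scale mixture density with mixing density $f(v)$ on $(0,\infty)$, then the dual $U'$ is again a normal scale mixture whose mixing density $g$ is obtained, up to the normalizing constant $p(0)$, by the map $g(w)\propto \frac{1}{w}\,f\!\bigl(\tfrac{1}{w}\bigr)$ composed with the reciprocal relation on the scale parameter — more precisely, the dual mixing random variable is a truncation/reflection transform of $1/L$ (or of $L$ itself), the truncation point being dictated by the support constraint $c\ge 0$ appearing in \eqref{nsm} and by $p'(0)$. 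I would then substitute the Levy density for $f$ and simplify.

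The key steps, in order, are: (1) write down $p'(u)$ from \eqref{dvdensity} and read off $p'(0)$; (2) write the Dual Voigt density as a normal scale mixture $\int_0^\infty (2\pi w)^{-1/2}e^{-u^2/2w} g(w)\,dw$ and determine $g$ by matching — the cleanest route is to start from the explicit form $p'(u)\propto e^{-\gamma|u|-\sigma^2 u^2/2}$ and recognise it, via Gneiting's dual-of-Laplace-type manipulation, as a mixture; (3) show that the resulting mixing density is supported on the bounded interval $(0,2/\sigma^2)$, which forces the reflection-about-$2/\sigma^2$ / truncation description $V'=2/\sigma^2-L[L<2/\sigma^2]$; (4) verify that the $L$ appearing is exactly $Levy(1/\sigma^2,\gamma^2/\sigma^4)$, i.e. the same mixing law that Proposition \ref{nsmvoigt2} attaches to $U$ (noting that the Voigt$(0,\gamma,\sigma^2)$ here uses the $(\gamma,\sigma)$ convention of Proposition \ref{pippo}, so a change of variables from the $(\sigma^2,\gamma^2)$ parametrisation of Lemma \ref{lemma}a is needed); (5) confirm normalization, i.e. that $\int_0^{2/\sigma^2} g(w)\,dw = 1$, which should reduce to the identity $2(1-\Phi(\gamma/\sigma)) = \sqrt{2\pi}\,\sigma^{-1} e^{-\gamma^2/2\sigma^2}\cdot p'(0)^{-1}$ already implicit in part (i) of Proposition \ref{pippo}.

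The main obstacle I expect is step (3)–(4): correctly tracking the reciprocal-scale transformation in Gneiting's formula and the truncation point. Gneiting's dual-mixing map sends a mixing variable with support $[c,\infty)$ to one with support $[0,1/(2\pi c\, p(0)^2)]$ or similar, and getting the constant $2/\sigma^2$ to come out exactly — rather than, say, $1/\sigma^2$ or $4/\sigma^2$ — requires being careful about whether the normal kernel in \eqref{nsm} carries variance $v$ or $2v$, and about the factor $2\pi p(0)p'(0)=1$ from \eqref{fundrel}. A secondary subtlety is that $L\sim Levy(\sigma^2,\gamma^2)$ has support $[\sigma^2,\infty)$, so the event $\{L<2/\sigma^2\}$ (after rescaling to $L\sim Levy(1/\sigma^2,\gamma^2/\sigma^4)$, support $[1/\sigma^2,\infty)$) is the nonempty interval $[1/\sigma^2,2/\sigma^2)$, and one must check this is exactly the range over which the reflected variable $2/\sigma^2-L$ sweeps out $(0,1/\sigma^2]$ — matching the lower truncation $c=1/\sigma^2$ inherited from the Levy support. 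Once the bookkeeping is aligned, the remaining computations are the routine substitution and the normalization check in step (5).
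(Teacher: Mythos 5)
Your overall route is the same as the paper's: apply Gneiting's equation~7 to the mixing density of the Voigt, normalize using $2\pi p(0)p'(0)=1$ together with the value of $p'(0)$ from Proposition~\ref{pippo}, and recognise the resulting density on $(0,1/\sigma^2)$ as a Levy law truncated at $2/\sigma^2$ and reflected about that point. Steps (1)--(5) of your outline track the paper's proof closely, and your observation that the reflected variable sweeps out $(0,1/\sigma^2]$ is correct.

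There is, however, one concrete error that would derail the computation if carried through literally. Twice you assert that $Levy(1/\sigma^2,\gamma^2/\sigma^4)$ is ``the mixing distribution of the Voigt\ldots by Proposition~\ref{nsmvoigt2}'' (and again in step (4): ``the same mixing law that Proposition~\ref{nsmvoigt2} attaches to $U$''). It is not: Proposition~\ref{nsmvoigt2} gives $Levy(\sigma^2,\gamma^2)$, supported on $[\sigma^2,\infty)$, as the Voigt's mixing, and no rescaling of the kernel in~\eqref{nsm} converts this into $Levy(1/\sigma^2,\gamma^2/\sigma^4)$. The reciprocal-parameter Levy is an \emph{output} of Gneiting's map, not an input: the density fed into $g(v)\propto p(0)^{-1}(2\pi)^{-1/2}v^{-3/2}f(1/v)$ must be $f=$ the density of $Levy(\sigma^2,\gamma^2)$; the substitution $v\mapsto 1/v$ carries the support $[\sigma^2,\infty)$ to $(0,1/\sigma^2]$, and only after the algebra does the result reveal itself as the reflected truncation of the \emph{different} law $Levy(1/\sigma^2,\gamma^2/\sigma^4)$. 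If you instead substitute $Levy(1/\sigma^2,\gamma^2/\sigma^4)$ into the formula, the resulting density is supported on $(0,\sigma^2)$, which contradicts the claimed $V'=2/\sigma^2-L[L<2/\sigma^2]$ (support $(0,1/\sigma^2]$) except when $\sigma=1$. The same confusion resurfaces when you attribute the bound to ``the lower truncation $c=1/\sigma^2$ inherited from the Levy support'': the correct statement is that $1/\sigma^2$ is the \emph{reciprocal} of the lower support point $\sigma^2$ of the Voigt's mixing. Note also that the dual mixing is not simply $1/L$ for the Voigt's mixing $L$ --- the density of $1/L$ carries the Jacobian $v^{-2}$ whereas Gneiting's formula carries $v^{-3/2}$, so the two differ by a factor $v^{1/2}$ and the truncation-and-reflection description is genuinely needed. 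With the roles of the two Levy laws sorted out, the remainder of your plan, including the normalization check via $F(2/\sigma^2)=2(1-\Phi(\gamma/\sigma))$, goes through exactly as in the paper.
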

\begin{proof}
$U$ is a zero centered normal scale mixture by proposition \ref{nsmvoigt2}. Since $p(0)<\infty$, it follows from Gneiting~\cite{gneiting} that its dual $U'$ is also a normal scale mixture. Thus, we have $U'=(V')^{1/2}Z$ for some mixing distribution $V'$. The mixing distribution can be found applying equation~7 in \cite{gneiting}: 

\begin{align}
p'(v)&= \frac{1}{p(0)}\frac{1}{(2\pi)^{1/2}  v^{3/2}} p\left(\frac{1}{v}\right)\nonumber\\
&= \frac{1}{p(0)}\frac{1}{(2\pi)^{1/2}  v^{3/2}} \frac{\sqrt{\gamma^2/2}}{\sqrt{\pi}} \frac{1}{\left(\frac{1}{v}-\sigma^2\right)^{3/2}}
\exp \left(-\frac{\gamma^2}{2} \frac{1}{(\frac{1}{v}-\sigma^2)} \right)I_{(\sigma^2,+\infty)}\left(\frac{1}{v}\right)\nonumber\\
&= \frac{\gamma}{2\pi p(0)}\frac{1}{\left(1-v\sigma^2\right)^{3/2}}\exp \left(-\frac{\gamma^2}{2} \frac{v}{(1-v\sigma^2)} \right)I_{(0,  \frac{1}{ \sigma^2})}(v)\nonumber\\
\quad \Aboxed{  p(0)&=\frac{1}{2\pi p'(0)}}\nonumber \\
&=\frac{1}{2\left(1-\Phi(\gamma/\sigma)\right)}\frac{\gamma/\sigma^2}{\sqrt{2\pi}}\left(\frac{1}{\sigma^2}-v\right)^{-3/2}e^{-\frac{(\gamma/\sigma^2)^2}{2\left(\frac{1}{\sigma^2}-v\right)}}I_{(0,  \frac{1}{ \sigma^2})}(v) \label{dvmixing}
\end{align}

In the last passage, we used the value of $p'(0)$ derived in proposition~\ref{pippo}. It remains to show that the mixing distribution is a linear transformation of a truncated Levy distribution.
 Let $L\sim Levy(1/\sigma^2, \gamma^2 /\sigma^4)$. Consider the new variable $Z$ obtained by truncating $L$ above $2/\sigma^2$. Since $ F( 2/\sigma^2)=2\left(1-\Phi(\gamma/\sigma)\right)$ we have $f(z)=\frac{1}{F(2/\sigma^2)}f(z)I_{(1/\sigma^2, 2/\sigma^2)}(z)$. Finally, the transformation $\hat{V}=2/\sigma^2 - Z$ mirrors the density of $Z$ over the truncation point and gives the density \ref{dvmixing}. Thus, we have $\hat{V} = 2/\sigma^2 - L \lbrack L < 2/\sigma^2\rbrack $ 
\end{proof}
The construction of the Dual Voigt mixing is depicted in Figure~\ref{construction} for $\sigma=1$.


Finally, we compare the dual Voigt mixing density (i.e., the Levy) with the Voigt mixing density in Figure~\ref{plotmixing}. We can notice an inverse relation between the dispersion of the two mixing distributions. Higher values of $\sigma$ are associated with flat Levy density and a small support in the dual Voigt. Similarly, for fixed $\sigma$, higher values of $\gamma$ lead to large dispersion in the Levy and low dispersion in the dual Voigt. The inverse relation between the dispersion of the Voigt mixing and the dual Voigt mixing is no coincidence. A notable example is the $N(0,a)$ whose dual is the $N(0,1/a^2)$. In general, dual densities exhibit a sort of uncertainty principle in that the product of their variances cannot be made arbitrarily small. Dreier~\cite{dreier} showed that the product of the variances is bounded below by $0.527<\Lambda<0.867$ and also proved that $\Lambda$ can be made arbitrarily large via an artificial example (see \cite{dreier}, Lemma 3.1). Gneiting \cite{gneiting2} showed that $\Lambda \ge 1$ when both densities are normal scale mixtures, with equality holding for standard normal. The pair (Voigt, dual Voigt) is quite pathological and $\Lambda$ is undefined due to the Voigt variance being undefined. 
However, the graph suggests that an inverse variability relation still holds.

\section{Infinite divisibility of the Voigt and Dual Voigt}
A random variable is infinitely divisible (id) if it can be expressed as the sum of an arbitrary number of independent and identically distributed variables. By Feller, (see pag. 425 \cite{feller}) if the variance distribution of a normal scale mixture is id then the mixture is id. Barndorff et al \cite{barndorff} used this fact to prove the infinite divisibility of the hyperbolic distribution. An immediate corollary of the formula $U=ZL^{1/2}$ is then that the Voigt distribution is id since the mixing distribution is a Levy, which is an id distribution. Of course, the infinite divisibility of the Voigt immediately follows from the definition. 

It is natural to ask whether the dual Voigt is infinitely divisible. We observe that the support of $V'$ is bounded and thus $V'$ is not id (see Feller~\cite{feller}, page 177). However, the infinite divisibility of the mixing is only a sufficient condition for the infinite divisibility of the associated normal scale mixture and Kelker~\cite{kelker} provided an example of an id mixture where the mixing is not id. Kelker (see~\cite{kelker}, theorem 2) also showed that if the mixing $G$ is bounded and $G(b)=1$ for some $b$ then the mixture is not infinitely divisible. This implies that the dual Voigt is not infinitely divisible for parameter values leading to modal densities greater than one. Moreover, Wolfe~\cite{wolfe} showed that a necessary condition for $G$ to be the mixing of an id mixture is that the tails of $G$ satisfy an exponential lower bound for large $b$. Wolfe's result implies that a bounded distribution cannot be the mixing distribution of an id normal variance mixture and thus we conclude that the Dual Voigt is not id.

\section{Parameter estimation for the Dual Voigt} 
In this section we briefly consider parameter estimation for the Dual Voigt. We first show the joint sufficiency of the sample absolute sum and sum of squares:
\begin{proposition}
\label{suffstat}
Let $(Y_1, \cdots, Y_n)$ be a random sample from the dual of a Voigt$(\gamma, \sigma^2)$. Then
$$ S_1(Y_1, \cdots, Y_n)= \sum_i | Y_i |,  \qquad S_2(Y_1, \cdots, Y_n)= \sum_i Y_i^2$$
is a pair of asymptotically normal, minimal sufficient and complete statistics.
\end{proposition}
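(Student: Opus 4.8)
The plan is to read everything off the exponential-family form of the Dual Voigt density established in Proposition~\ref{pippo}. For $U'$ a centered Dual Voigt$(\gamma,\sigma^2)$ that proposition gives $p'(u)=a(\gamma,\sigma^2)\exp\!\left(-\gamma|u|-\tfrac{\sigma^2}{2}u^2\right)$, so for an i.i.d.\ sample $(Y_1,\dots,Y_n)$ the joint density factorizes as
\[\prod_{i=1}^n p'(y_i)=a(\gamma,\sigma^2)^n\exp\!\left(-\gamma S_1-\tfrac{\sigma^2}{2}S_2\right),\]
with $S_1=\sum_i|Y_i|$ and $S_2=\sum_i Y_i^2$. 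By the Fisher--Neyman factorization theorem this already shows that the pair $(S_1,S_2)$ is sufficient.

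Next I would check that this is a regular (full-rank) two-parameter exponential family. The natural parameter is $(\eta_1,\eta_2)=(-\gamma,-\sigma^2/2)$, and as $(\gamma,\sigma^2)$ ranges over $(0,+\infty)^2$ the point $(\eta_1,\eta_2)$ ranges over the open third quadrant $(-\infty,0)\times(-\infty,0)$, which has nonempty interior in $\mathbb{R}^2$. The representation is moreover minimal: the functions $d_1(u)=|u|$ and $d_2(u)=u^2$ are affinely independent (no identity $a|u|+bu^2=c$ can hold on a set of positive Lebesgue measure unless $a=b=c=0$) and the natural parameter set is not contained in any line of $\mathbb{R}^2$. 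The classical theory of regular exponential families then yields at once that the natural statistic $(S_1,S_2)$ is both minimal sufficient and complete, completeness being the usual consequence of the uniqueness theorem for Laplace transforms on an open parameter set.

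For the asymptotic claim, note that the Dual Voigt density carries the Gaussian factor $e^{-\sigma^2u^2/2}$, so $U'$ has finite moments of every order --- the explicit second and fourth moments are displayed just before the proposition. Hence the i.i.d.\ vectors $W_i=(|Y_i|,Y_i^2)$ have a finite mean $m=(E|Y_1|,EY_1^2)$ and a finite covariance matrix $\Sigma$, and $\Sigma$ is positive definite because $|Y_1|$ and $Y_1^2$ are not affinely dependent. The multivariate central limit theorem then gives
\[\frac{1}{\sqrt n}\bigl((S_1,S_2)-n\,m\bigr)\ \xrightarrow{d}\ N_2(0,\Sigma),\]
so $(S_1,S_2)$ is asymptotically bivariate normal.

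I do not anticipate a real obstacle: the proposition is essentially a dictionary entry translating the exponential-family form of Proposition~\ref{pippo} through standard facts about regular exponential families and the central limit theorem. The only points that need a little care are (i) verifying the full-rank and minimality conditions, so that the minimal-sufficiency and completeness theorems genuinely apply, and (ii) recording the finiteness of the fourth moment (immediate from the Gaussian tail), which is what licenses both the CLT and the non-degeneracy of $\Sigma$.
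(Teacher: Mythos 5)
Your proposal is correct and follows essentially the same route as the paper: both arguments read minimal sufficiency and completeness off the two-parameter exponential-family form established in Proposition~\ref{pippo}(i) and obtain asymptotic normality from the central limit theorem applied to the sample moments of $|Y|$, which have finite variance thanks to the Gaussian factor in the density. Your write-up is simply more explicit than the paper's, in that it verifies the full-rank and affine-independence conditions that the paper leaves implicit.
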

\begin{proof}
 It follows from proposition~\ref{pippo} i) that
\begin{equation}
 S_1=\sum_i d_1(Y_i)=\sum_i  |Y_i | \qquad  \textrm{and} \qquad S_2=\sum_i d_2(Y_i)=\sum_i Y_i^2
 \end{equation}
is a pair of complete minimal sufficient statistics. Asymptotic normality of $(S_1,S_2)$ follows immediately from Central Limit Theorem as both $S_1$ and $S_2$ are linear functions of the sample moments of $|Y|$ and both moments have finite variance (see \ref{3.1.2}).
\end{proof}
 We note that the sample mean and variance of $|Y|$ are also jointly sufficient since they are linear functions of $S_1$ and $S_2$,  respectively. 

We now present a simple procedure for parameter estimation. Let $(\gamma, \sigma^2)$ denote the unknown parameter vector and let $(y_1, \cdots, y_n)$ be an iid sample from the dual of $U\sim Voigt(\gamma, \sigma^2)$. Closed-form solutions of the moment or maximum likelihood equations are clearly not available due to the error function in the density of the Dual Voigt. Previous results connecting $U'$ to the truncated normal distribution suggest that we can use a sample transform strategy. Let $(y_1, \cdots, y_n)$ be an iid sample from $U'$, then proposition~\ref{pippo} i) implies that $(t_1, \cdots, t_n)$ where $t_i= |y_i | $

is a sample from 
$$(X\,|\, X>0)\sim T \sim TruncNorm(0, +\infty, -\gamma/\sigma^2, 1/\sigma^2).$$

where the last two parameters are the mean and variance of the underlying normal distribution. Thus, parameter estimation can be performed by first estimating the last two parameters of $T$ with $e_1$ and $e_2$, and then mapping back the estimates to $\gamma$ and $\sigma$ via 
 \begin{equation}
 \label{transf}
  \widehat{\gamma}=-e_1\cdot (e_2)^{-1} \qquad \textrm{and} \qquad \widehat{\sigma}^2=(e_2)^{-1}
 \end{equation}

In the first stage, it is necessary to find the parameters of a truncated normal distribution (with known truncation values). Maximum Likelihood (ML) and Method of Moments (MoM) are typically found using iterative optimization algorithms \cite{cohen}, e.g., the Nelder-Mead algorithm implemented by function \texttt{optim} in the base \textsf{R} package. We present estimates obtained in this way in Table~\ref{tabest} when the true parameter values are $\gamma=\sigma=1$. We highlight that the absolute bias and standard deviation of the estimates are very similar for parameter values randomly chosen in $[0.2, 4]$. Overall, the procedure shows a satisfactory performance. Occasionally, we observed failure to reach the minimum and inconsistent values, particularly with small sample sizes $(n=50, 100)$.

 \begin{table}[h!] 
\centering 
\begin{tabular}{rrr cc cc cc } 
\toprule 
     \multicolumn{3}{c}{$n$} & \multicolumn{2}{c}{$\hat{\gamma}$} & \multicolumn{2}{c}{$\hat{\sigma}$} & \multicolumn{2}{c}{$st. dev.$}\\ 
  \cmidrule(rl){4-5}   \cmidrule(rl){6-7}  \cmidrule(rl){8-9} 
   & &  & \textsc{ML}& \textsc{MoM}  & \textsc{ML}& \textsc{MoM} & \textsc{ML}& \textsc{MoM}  \\
\midrule 

       & 100 &  & 0.68& 0.71&0.70&0.70& 0.62 & 0.63\\ 

    \cmidrule(rl){1-9} 
    & 500 &   &0.94 &0.94&1.03&1.03& 0.28& 0.28 \\

\cmidrule(rl){1-9} 
    & 1000 &   &0.98  &0.98&1.01&1.01& 0.19& 0.19 \\

\cmidrule(rl){1-9} 
    & 5000 &   &1.00  &1.00&0.99&0.99& 0.08 & 0.08 \\
\bottomrule 
\end{tabular}
\caption{Average point estimates and standard deviation for parameters of the Dual Voigt$(1,1)$ calculated over $100$ simulated datasets. Method of moments (MoM) and maximum likelihood (ML) estimates obtained with the Nelder-Mead algorithm implemented by function \texttt{optim} in base \textsf{R} package.
}
\label{tabest} 
\end{table}

\section{Discussion}
Normal scale mixtures are a classic topic in distribution theory. Several papers focused on showing that common random  variables admit a normal scale mixture representation. Andrews and Mallows \cite{mallows} showed that $t-$student and Laplace distributions are normal scale mixtures with an inverse Gamma and Gamma random variable mixing distribution, respectively. The former result was proved later by Ding and Blitzstein \cite{ding} using elementary methods. More recently, Kotz and Ostrovskii~\cite{kotz} showed that the Linnik distribution is a normal scale mixture. Gneiting \cite{gneiting} focused on dual of normal scale mixtures showing that normal scale mixtures admit dual of the same type. In this paper we add to the topic showing that the Voigt profile is a normal scale mixture and analyzed its dual counterpart. We showed that the Dual Voigt and its mixing are transformations, via truncation and reflection, of the normal and the Levy random variable respectively and discussed a sample transform procedure for parameter estimation. 

\bibliographystyle{tfp}
\bibliography{mybibfile}

\begin{thebibliography}{10}
\expandafter\ifx\csname url\endcsname\relax
  \def\url#1{\texttt{#1}}\fi
\expandafter\ifx\csname urlprefix\endcsname\relax\def\urlprefix{URL }\fi
\expandafter\ifx\csname href\endcsname\relax
  \def\href#1#2{#2} \def\path#1{#1}\fi

\bibitem{ding}
P.~Ding, J.~K. Blitzstein,
  \href{https://doi.org/10.1080/00031305.2017.1291448}{On the gaussian mixture
  representation of the laplace distribution}, The American Statistician 72~(2)
  (2018) 172--174.
\newblock \href
  {http://arxiv.org/abs/https://doi.org/10.1080/00031305.2017.1291448}
  {\path{arXiv:https://doi.org/10.1080/00031305.2017.1291448}}, \href
  {http://dx.doi.org/10.1080/00031305.2017.1291448}
  {\path{doi:10.1080/00031305.2017.1291448}}.
\newline\urlprefix\url{https://doi.org/10.1080/00031305.2017.1291448}

\bibitem{chambers}
J.~M. Chambers, C.~L. Mallows, B.~W. Stuck,
  \href{http://www.jstor.org/stable/2285309}{A method for simulating stable
  random variables}, Journal of the American Statistical Association 71~(354)
  (1976) 340--344.
\newline\urlprefix\url{http://www.jstor.org/stable/2285309}

\bibitem{feller}
W.~{Feller}, An introduction to probability theory and its applications, Volume
  2, John Wiley \& Sons, 1970.

\bibitem{nolan}
J.~P. {Nolan}, Univariate Stable Distributions, Springer Cham, 2020.

\bibitem{otiniano}
C.~Otiniano, T.~Sousa, P.~Rathie,
  \href{https://www.sciencedirect.com/science/article/pii/S0377042712004396}{Stable
  random variables: Convolution and reliability}, Journal of Computational and
  Applied Mathematics 242 (2013) 1--11.
\newblock \href {http://dx.doi.org/https://doi.org/10.1016/j.cam.2012.10.013}
  {\path{doi:https://doi.org/10.1016/j.cam.2012.10.013}}.
\newline\urlprefix\url{https://www.sciencedirect.com/science/article/pii/S0377042712004396}

\bibitem{kim}
{S. Kim}, {J. Y. Lee}, {D. K. Sung}, A shifted gamma distribution model for
  long-range dependent internet traffic, IEEE Communications Letters 7~(3)
  (2003) 124--126.
\newblock \href {http://dx.doi.org/10.1109/LCOMM.2002.808400}
  {\path{doi:10.1109/LCOMM.2002.808400}}.

\bibitem{rossberg}
H.~Rossberg, Positive definite probability densities, Theory of Probability and
  Applications 35 (1990) 169--174.

\bibitem{good}
I.~J. Good, \href{https://doi.org/10.1080/00949659508811671}{C439. dual density
  functions}, Journal of Statistical Computation and Simulation 52~(2) (1995)
  193--194.
\newblock \href
  {http://arxiv.org/abs/https://doi.org/10.1080/00949659508811671}
  {\path{arXiv:https://doi.org/10.1080/00949659508811671}}, \href
  {http://dx.doi.org/10.1080/00949659508811671}
  {\path{doi:10.1080/00949659508811671}}.
\newline\urlprefix\url{https://doi.org/10.1080/00949659508811671}

\bibitem{nadaraja}
S.~Nadarajah, \href{http://www.jstor.org/stable/27644094}{Pdfs and dual pdfs},
  The American Statistician 63~(1) (2009) 45--48.
\newline\urlprefix\url{http://www.jstor.org/stable/27644094}

\bibitem{horrace}
W.~C. Horrace, \href{https://doi.org/10.1007/s11123-013-0381-8}{Moments of the
  truncated normal distribution}, Journal of Productivity Analysis 43~(2)
  (2015) 133--138.
\newblock \href {http://dx.doi.org/10.1007/s11123-013-0381-8}
  {\path{doi:10.1007/s11123-013-0381-8}}.
\newline\urlprefix\url{https://doi.org/10.1007/s11123-013-0381-8}

\bibitem{gneiting}
T.~Gneiting, Normal scale mixtures and dual probability densities, Journal of
  Statistical Computation and Simulation 59~(4) (1997) 375--384.
\newblock \href
  {http://arxiv.org/abs/https://doi.org/10.1080/00949659708811867}
  {\path{arXiv:https://doi.org/10.1080/00949659708811867}}, \href
  {http://dx.doi.org/10.1080/00949659708811867}
  {\path{doi:10.1080/00949659708811867}}.

\bibitem{dreier}
I.~Dreier, On the uncertainty principle for positive definite densities,
  Zeitschrift f{\"u}r Analysis und ihre Anwendungen 15.
\newblock \href {http://dx.doi.org/10.4171/ZAA/743}
  {\path{doi:10.4171/ZAA/743}}.

\bibitem{gneiting2}
T.~Gneiting, \href{https://doi.org/10.1080/02331889808802628}{On the
  uncertainty relation for positive definite probability densities}, Statistics
  31~(1) (1998) 83--88.
\newblock \href
  {http://arxiv.org/abs/https://doi.org/10.1080/02331889808802628}
  {\path{arXiv:https://doi.org/10.1080/02331889808802628}}, \href
  {http://dx.doi.org/10.1080/02331889808802628}
  {\path{doi:10.1080/02331889808802628}}.
\newline\urlprefix\url{https://doi.org/10.1080/02331889808802628}

\bibitem{barndorff}
O.~E. Barndorff-Nielsen, C.~Halgreen,
  \href{https://api.semanticscholar.org/CorpusID:120184336}{Infinite
  divisibility of the hyperbolic and generalized inverse gaussian
  distributions}, Zeitschrift f{\"u}r Wahrscheinlichkeitstheorie und Verwandte
  Gebiete 38 (1977) 309--311.
\newline\urlprefix\url{https://api.semanticscholar.org/CorpusID:120184336}

\bibitem{kelker}
D.~Kelker, \href{http://www.jstor.org/stable/2239832}{Infinite divisibility and
  variance mixtures of the normal distribution}, The Annals of Mathematical
  Statistics 42~(2) (1971) 802--808.
\newline\urlprefix\url{http://www.jstor.org/stable/2239832}

\bibitem{wolfe}
S.~J. Wolfe,
  \href{https://www.sciencedirect.com/science/article/pii/138572587890032X}{On
  the infinite divisibility of variance mixtures of normal distribution
  functions}, Indagationes Mathematicae (Proceedings) 81~(1) (1978) 154--156.
\newblock \href
  {http://dx.doi.org/https://doi.org/10.1016/1385-7258(78)90032-X}
  {\path{doi:https://doi.org/10.1016/1385-7258(78)90032-X}}.
\newline\urlprefix\url{https://www.sciencedirect.com/science/article/pii/138572587890032X}

\bibitem{cohen}
A.~C. Cohen, \href{https://doi.org/10.1080/01621459.1949.10483324}{On
  estimating the mean and standard deviation of truncated normal
  distributions}, Journal of the American Statistical Association 44~(248)
  (1949) 518--525.
\newblock \href {http://dx.doi.org/10.1080/01621459.1949.10483324}
  {\path{doi:10.1080/01621459.1949.10483324}}.
\newline\urlprefix\url{https://doi.org/10.1080/01621459.1949.10483324}

\bibitem{mallows}
D.~F. Andrews, C.~L. Mallows, \href{http://www.jstor.org/stable/2984774}{Scale
  mixtures of normal distributions}, Journal of the Royal Statistical Society.
  Series B (Methodological) 36~(1) (1974) 99--102.
\newline\urlprefix\url{http://www.jstor.org/stable/2984774}

\bibitem{kotz}
S.~Kotz, I.~Ostrovskii,
  \href{https://www.sciencedirect.com/science/article/pii/0167715294002525}{A
  mixture representation of the linnik distribution}, Statistics \& Probability
  Letters 26~(1) (1996) 61--64.
\newblock \href
  {http://dx.doi.org/https://doi.org/10.1016/0167-7152(94)00252-5}
  {\path{doi:https://doi.org/10.1016/0167-7152(94)00252-5}}.
\newline\urlprefix\url{https://www.sciencedirect.com/science/article/pii/0167715294002525}

\end{thebibliography}

 \appendix

   \section{Existence of the Dual Voigt}
  Let $U$ be a random variable with density $p(u)$. The existence of a dual density is equivalent to $p(u)$ being positive-definite function (see Theorem 2.1 in Rossberg \cite{rossberg}). A positive-definite function is continuous, symmetric and bounded on $\mathbb{R}$. When $U$ is a centered Voigt$(\sigma^2, \gamma)$ the density $p(u)$ is clearly symmetric and continuous so it remains to prove that $p(0)$ is finite:
\begin{align*}
    p(0) &=\int_{\sigma^2}^{\infty}  \frac{1}{\sqrt{2\pi v}}\cdot f(v)dv \qquad (v=\tau+\sigma^2)\\
      &=\int_{0}^{\infty}  \frac{1}{\sqrt{2\pi (\tau+\sigma^2)}}\cdot \frac{\sqrt{\gamma^2/2}}{\sqrt{\pi}} \frac{1}{\tau^{1/2+1}} 
      \exp\left( -\frac{\gamma^2}{2}\frac{1}{\tau}\right)d\tau \nonumber \\[4pt]     
            &\le\int_{0}^{\infty}  \frac{1}{\sqrt{2\pi \tau}}\cdot \frac{\sqrt{\gamma^2/2}}{\sqrt{\pi}} \frac{1}{\tau^{1/2+1}} 
      \exp\left( -\frac{\gamma^2}{2}\frac{1}{\tau}\right)d\tau \nonumber \\[4pt]     
      &=\frac{1}{\sqrt{2\pi }}\frac{1}{\sqrt{\pi }} \frac{1}{\sqrt{\gamma^2/2}}\cdot \int_{0}^{\infty} \frac{\gamma^2/2}{\Gamma(1)} \frac{1}{\tau^{1+1}} \exp\left( -\frac{\gamma^2}{2}\frac{1}{\tau}\right)d\tau \nonumber \\[4pt] 
      &=\frac{1}{\gamma \pi}\nonumber
\end{align*}

after rearranging to isolate the $\Gamma(1/2,\gamma^2/2)$ density.

   \section{Code for Dual Voigt random samples and parameter estimation}
    The code for sampling and estimating parameters of the Dual Voigt is available via this public url / online supplementary material. The following pseudocode describes an accept-reject algorithm for random sampling from a Dual Voigt:
   \begin{enumerate}
   \item Sample candidate $c$ from $N(-\gamma/\sigma^2, 1/\sigma^2)$
   \item If $c>0$ set $x=c$
   \item If $c \le -2\gamma/\sigma^2$ set $x=c+2\gamma/\sigma^2$
   \item else reject
   \end{enumerate}
The algorithm exploits the proportionality of the dual Voigt density to the tail of the normal distribution. A simple strategy is then to accept normal random variates when they fall in the tail and reject them otherwise. This sampling strategy has probability of acceptance equal to $(1-\Phi(\gamma/\sigma))$ so it is not efficient when the ratio $\gamma/\sigma$ is large. 
 
  \begin{figure}[p]
  \centering
  \includegraphics[width=1.1\linewidth]{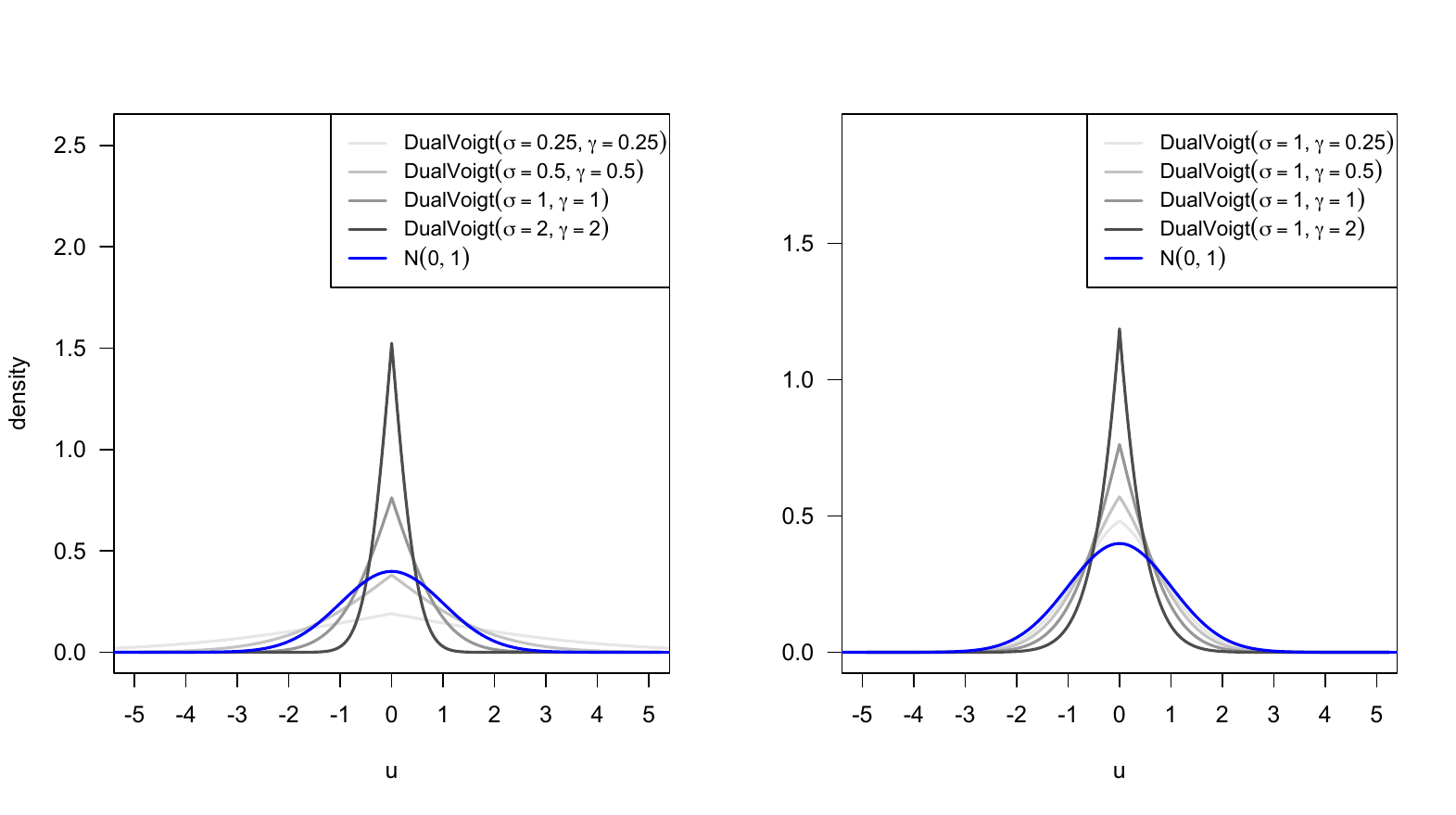}
\caption{The dual Voigt density for $\sigma=\gamma$ pairs (left) and for $\sigma=1, \gamma=0.25, 0.5, 1, 2$ (right). Increasing values of $\gamma$ are associated with increasingly spiked densities. When $\sigma=\gamma$ the Dual Voigt peak exceeds the normal one when $\gamma<0.523$ (left).}
\label{plotdual}
\end{figure}
\begin{figure}[p]
  \centering
  \includegraphics[width=0.8\linewidth]{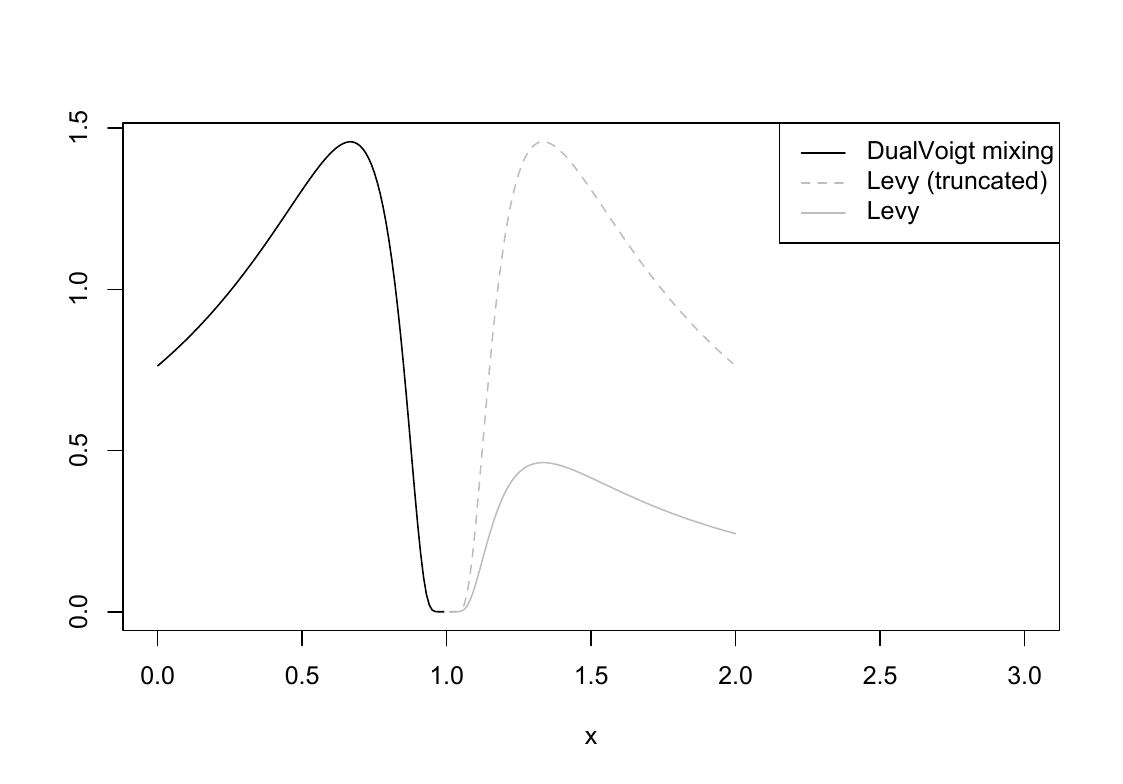}
\caption{Construction of the Dual Voigt mixing ($\sigma=1$). A Levy (gray) is truncated (gray, dashed) and reflected over its minimum (black).}
\label{construction}
\end{figure} 

\begin{figure}[p]
  \centering
  \includegraphics[width=0.7\linewidth]{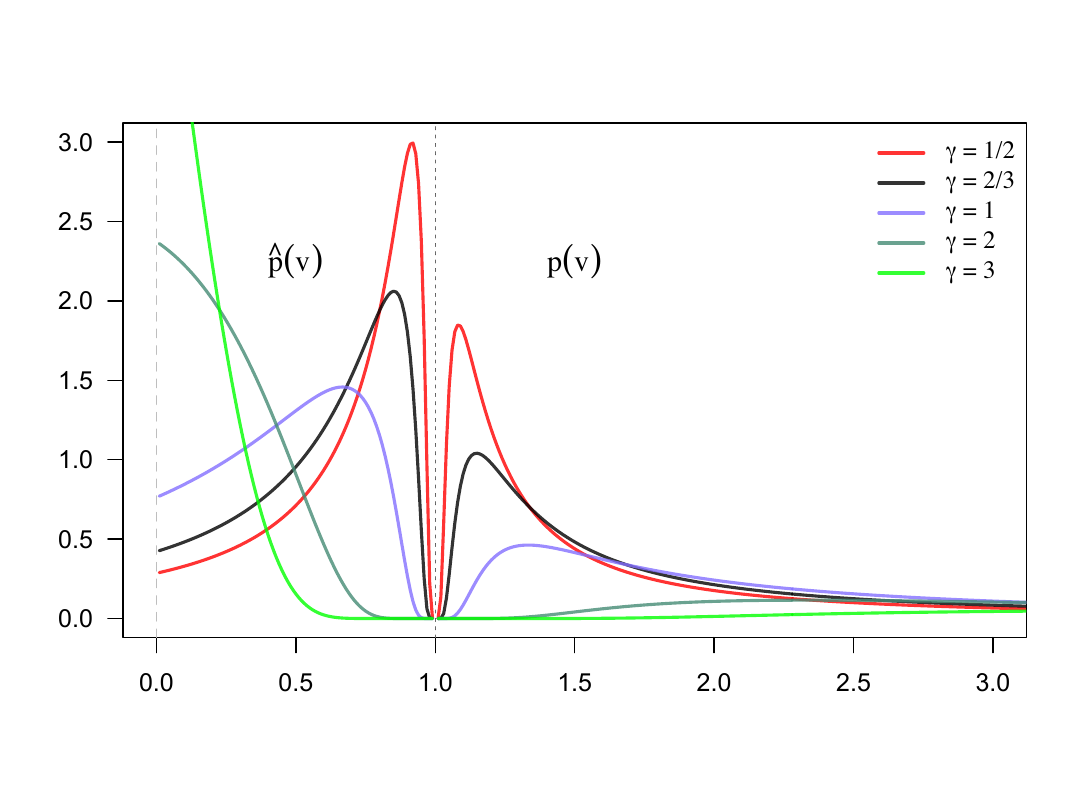}
\caption{Mixing distributions for the dual Voigt (left) and its dual (right). }
\label{plotmixing}
\end{figure}
\end{document}